\theoremstyle{definition}
\newtheorem{thm}{Theorem}[section]
\newtheorem{cor}[thm]{Corollary}
\newtheorem{de}[thm]{Definition}
\newtheorem{prop}[thm]{Proposition}
\theoremstyle{remark}
\newtheorem{rem}[thm]{Remark}
\numberwithin{equation}{section}
\newcommand{\Rmnum}[1]{\expandafter\@slowromancap\romannumeral #1@}
\newcommand{\bv}{\mathbf{v}}
\title[Fundamental domain]{A fundamental domain for $PGL(2,\mathbb{F}_q[t])\backslash PGL\!\left(2,\mathbb{F}_q\!\left(\!(t^{-1})\!\right)\!\right)$}
\begin{document}

\subjclass[2000]{Primary 37P20, Secondary 20G25, 20H20.}
\author{Sanghoon Kwon}
\address{}
\email{skwon@cku.ac.kr}

\date{January 20, 2019}
\maketitle

\begin{abstract}
We give a strong fundamental domain for the quotient of $PGL_2\!\left(\mathbb{F}_q(\!(t^{-1})\!)\!\right)$ by $PGL_2\!\left(\mathbb{F}_q[t]\right)$ as a subset of distinct ordered triple points of $\mathbb{P}^1(\mathbb{F}_q(\!(t^{-1})\!))$.
\end{abstract}

\section{Introduction}\label{sec:1}

Given a topological space and a group acting on it, the images of a single point under the group action form an orbit of the action. A \emph{strong fundamental domain} is a subset of the space which contains exactly one point from each of these orbits.

The action of the modular group $\textrm{PSL}(2,\mathbb{Z})$ on the unit tangent bundle $\textrm{T}^1\mathbb{H}$ of the upper half plane $\mathbb{H}$ by Mobius transformation 
$$\left[\left(\begin{array}{cc} a & b \\ c & d\end{array}\right)\right]\colon (z,\bv)\mapsto \left(\frac{az+b}{cz+d},\frac{\bv}{(cz+d)^2}\right)\textrm{ for } z\in \mathbb{H},\bv\in \textrm{T}_z\mathbb{H}$$
serves as a key example of arithmetic and geometry. For each $(z,\bv)$ in $\textrm{T}^1\mathbb{H}$, we can find a neighborhood of $(z,\bv)$ which does not contain any other element of the $\textrm{PSL}(2,\mathbb{Z})$-orbit of $(z,\bv)$. %This enables us to construct fundamental domains, which contain exactly one representative for the $\textrm{PSL}(2,\mathbb{Z})$-orbit of every $(z,\bv)$ in $\textrm{T}^1\mathbb{H}.$ 

There are various ways of constructing a strong fundamental domain, but a common choice is the union 
\begin{align*}\{(z,\bv)\colon z\in R,\bv\in \textrm{T}_z^1\mathbb{H}\}&\cup\left\{(w_1,\bv)\in \textrm{T}^1\mathbb{H}\colon 0\le\textrm{arg}(\bv)<\frac{\pi}{3}\right\}\\&\cup\left\{(w_2,\bv)\in\textrm{T}^1\mathbb{H}\colon 0\le\textrm{arg}(\bv)<\pi\right\}\end{align*} for two ramified points $w_1=\frac{1}{2}+\frac{\sqrt{3}i}{2}$ and $w_2=i$ and the region 
$$R = \left\{ z \in\mathbb{H} \colon | z |  > 1 ,  -\frac{1}{2}\le  \textrm{Re} ( z )     < \frac{1}{2}  \right\}\cup\left\{z\in\mathbb{H}\colon|z|=1,-\frac{1}{2}<\textrm{Re}(z)< 0\right\}$$   
bounded by the vertical lines $\textrm{Re}(z) =-\frac{1}{2}$ and $\textrm{Re}(z) =\frac{1}{2}$ and the circle $|z|=1$.

The boundary at infinity $\partial_\infty\mathbb{H}^2$ may be identified with $S^1$ and hence with $\mathbb{P}^1(\mathbb{R})$. Let us say that a mutually distinct ordered triple points $(x_1,x_2,x_3)\in\mathbb{P}^1(\mathbb{R})^3$ is \emph{positively ordered} if one reaches $x_2$ before $x_3$ when starting counterclockwise from $x_1$. We note that there is a bijection between the unit tangent bundle $\textrm{T}^1\mathbb{H}$ and the set of mutually distinct positively ordered triple points $(x_1,x_2,x_3)$ of $\mathbb{P}^1(\mathbb{R})$.

In this article, we construct a  fundamental domain for the action of modular group on projective general linear group over a field of formal series, namely, the action of $PGL_2(\mathbb{F}_q[t])$ on $PGL\!\left(2,\mathbb{F}_q\!\left(\!(t^{-1})\!\right)\right)$. %We note that there is a bijection between the set of the set of distinct ordered triple points of $\mathbb{P}^1(\mathbb{F}_q(\!(t^{-1})\!))$. 

Let $\mathbf{K}$ be the field $\mathbb{F}_q(\!(t^{-1})\!)$ of Laurent series in $t^{-1}$ over a finite field $\mathbb{F}_q$ and $\mathbf{Z}$ be the subring $\mathbb{F}_q[t]$ of polynomials in $t$ over $\mathbb{F}_q$ of $\mathbf{K}$. We further denote by $\mathcal{O}$ the local ring $\mathbb{F}_q[\![t^{-1}]\!]$ of $\mathbf{K}$ which consists of power series in $t^{-1}$ over $\mathbb{F}_q$. From now on, let $G=PGL_2\!\left(\mathbb{F}_q(\!(t^{-1})\!)\right)$, $\Gamma=PGL_2\!\left(\mathbb{F}_q[t]\right)$, $h$ the diagonal element $$\left[\left(\begin{array}{cc} t & 0 \\ 0 & 1\end{array}\right)\right]$$ of $G$ and $W=PGL_2(\mathcal{O})$ a maximal compact subgroup of $G$.  

%We denote by $\phi_a\colon \Gamma\backslash G\to \Gamma\backslash G$ the right translation map given by $x\mapsto xa$. Let $m$ be the unique $G$-invariant probability measure on $\Gamma\backslash G$. Let

Let us denote by $\mathbb{P}^1(\mathbf{K})^3_{\textrm{dist}}$ the set of \emph{mutually distinct} ordered triple points $(\omega_1,\omega_2,\omega_3)$ of $\mathbb{P}^1(\mathbf{K})$. Since two by two projective general linear group over any field $F$ acts simply transitively on distinct ordered triple points in $\mathbb{P}^1(F)$ by M\"{o}bius transformation, we have a bijection $\Phi\colon G\to\mathbb{P}^1(\mathbf{K})^3_{\textrm{dist}}$ given by $$\Phi(g)=g\cdot(0,1,\infty).$$

We state our main theorem.

\begin{thm}\label{thm:main} Given $(\omega_1,\omega_2,\omega_3)\in\mathbb{P}^1(\mathbf{K})^{3}_{\textrm{dist}}$, there is a unique $\gamma\in \Gamma$ such that $$\gamma\cdot(\omega_1,\omega_2,\omega_3)\in S_0\cap S_1\cap(S_2\cup S_3)$$ where \begin{align*}
S_0&=\{(\omega_1,\omega_2,\omega_3)\in\mathbb{P}^1(\mathbf{K})^{3}_{\textrm{dist}}\colon\textrm{the leading coefficient of }\omega_2\textrm{ is }1\} \\
S_1&=\{(\omega_1,\omega_2,\omega_3)\in\mathbb{P}^1(\mathbf{K})^{3}_{\textrm{dist}}\colon\deg\omega_1<0<\deg\omega_3\} \\
S_2&=\{(\omega_1,\omega_2,\omega_3)\in\mathbb{P}^1(\mathbf{K})^{3}_{\textrm{dist}}\colon\deg\omega_1\ne\deg\omega_2<\deg\omega_3\} \\
S_3&=\{(\omega_1,\omega_2,\omega_3)\in\mathbb{P}^1(\mathbf{K})^{3}_{\textrm{dist}}\colon\deg\omega_1=\deg\omega_2=\deg(\omega_1-\omega_2)\}.
\end{align*}
\end{thm}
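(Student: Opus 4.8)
\emph{Strategy of proof.} The plan is to verify directly that $D := S_0\cap S_1\cap(S_2\cup S_3)$ contains exactly one point of each $\G$-orbit on $\mathbb{P}^1(\bK)^{3}_{\textrm{dist}}$; through the bijection $\Phi$ this is the assertion that $\Phi^{-1}(D)$ is a strong fundamental domain for $\G\backslash G$. Since $G$ acts simply transitively on distinct triples, $\mathrm{Stab}_G(\omega_1,\omega_2,\omega_3)=1$, so uniqueness reduces to: if $\bx\in D$ and $\gamma\bx\in D$ with $\gamma\in\G$, then $\gamma=1$. Throughout I would use that $\G$ is generated by $u_p\colon\omega\mapsto\omega+p$ ($p\in\bZ$), $d_a\colon\omega\mapsto a\omega$ ($a\in\F_q^\times$) and $\iota\colon\omega\mapsto\omega^{-1}$, acting coordinatewise on triples, together with the dictionary with the Bruhat--Tits tree $T$ of $G$ (regular of degree $q+1$, base vertex $v_0$ fixed by $W$): the $q+1$ directions at $v_0$ are indexed by $\mathbb{P}^1(\F_q)$, with $\omega$ in the direction of $0$ iff $\deg\omega<0$, in the direction of $\infty$ iff $\deg\omega>0$ or $\omega=\infty$, and in the ``$c$-direction'' iff $\deg\omega=0$ with leading coefficient $c\in\F_q^\times$; $h$ translates the axis from $0$ to $\infty$ by one step; $\G\cap W=PGL_2(\F_q)$ acts sharply $3$-transitively on these directions; and $\G\backslash T$ is a ray with a single, thick end, so the components of $T\setminus\G v_0$ are one-ended and hence every bi-infinite geodesic of $T$ passes through a vertex of $\G v_0$. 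In this language, $S_1$ is exactly the statement that $v_0$ lies on the geodesic $\ell:=(\omega_1,\omega_3)$ and that $\ell$ runs along the $0$--$\infty$ axis at $v_0$.

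For existence, given $\bx=(\omega_1,\omega_2,\omega_3)$, I would first reach $S_1$. If $\omega_1$ and $\omega_3$ already point in different directions at $v_0$, go on; otherwise move that common direction to the direction of $0$ (by $\iota$, or by $u_{-c}$ for the relevant $c\in\F_q^\times$), so that $\omega_1,\omega_3\in t^{-1}\cO$, and then iterate the operation ``apply $\iota$, then subtract the polynomial part of the first coordinate by some $u_p$''. When $\omega_1,\omega_3\in t^{-1}\cO$ this operation strictly increases the integer $\deg(\omega_1-\omega_3)$, which stays $\le-1$ as long as the process has not separated $\omega_1$ from $\omega_3$; hence it terminates, necessarily with $\omega_1,\omega_3$ in distinct directions, i.e.\ with $v_0\in\ell$ (the finitely many positions with some $\omega_i\in\{0,\infty\}$ are handled by an initial $\iota$ or $u_p$, subtracting the polynomial part of the third coordinate when the first is $\infty$). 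Applying the unique element of $PGL_2(\F_q)$ carrying the direction of $\omega_1$ to that of $0$ and the direction of $\omega_3$ to that of $\infty$ puts $\bx$ into $S_1$, and then the map $d_a$ with $a=\mathrm{lc}(\omega_2)^{-1}$ preserves $S_1$ and gives $S_0$ (here $\omega_2\notin\{0,\infty\}$ is automatic once $S_1$ holds, as $\omega_2$ is distinct from $\omega_1,\omega_3$; if $\omega_2\in\{0,\infty\}$ initially, one first moves it). Finally, inspecting where the median $m$ of $\omega_1,\omega_2,\omega_3$ sits on $\ell$ relative to $v_0$ --- controlled by $\deg\omega_2$ against $\deg\omega_1,\deg\omega_3$, and in the borderline cases by leading coefficients --- one sees that, after possibly replacing $v_0$ by the other vertex of $\G v_0\cap\ell$ lying just beyond the last digit shared by $\omega_1$ and $\omega_2$ and renormalising $S_0$, exactly one of $S_2$, $S_3$ holds. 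This is the step that pins down the precise cut-offs in $S_2$ (strict ``$<$'' and ``$\deg\omega_1\ne\deg\omega_2$'') and $S_3$ (the chain $\deg\omega_1=\deg\omega_2=\deg(\omega_1-\omega_2)$).

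For uniqueness, suppose $\bx\in D$ and $\gamma\bx\in D$. Applying $S_1$ to $\bx$ and to $\gamma\bx$ shows that $\ell=(\omega_1,\omega_3)$ runs along the axis both at $v_0$ and at $\gamma^{-1}v_0$, so $v_0$ and $\gamma^{-1}v_0$ are two vertices of $\G v_0$ on $\ell$ in axis position. The crux is to show, using $S_0$ and $S_2\cup S_3$ on both $\bx$ and $\gamma\bx$, that there is only one such vertex --- namely the one determined by the location of the median $m$ --- whence $\gamma^{-1}v_0=v_0$ and so $\gamma\in PGL_2(\F_q)$. Since $\gamma$ must also fix the direction of $0$ (containing both $\omega_1$ and $\gamma\omega_1$ by $S_1$) and likewise the direction of $\infty$, it cannot interchange the axis ends, so $\gamma=d_a$ for some $a\in\F_q^\times$; then $S_0$ forces $1=\mathrm{lc}(d_a\omega_2)=a\,\mathrm{lc}(\omega_2)=a$, hence $\gamma=1$. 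With existence, this gives the theorem.

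The principal obstacle in both halves is the bookkeeping around this ``reduced base vertex''. For existence one must check termination of the reduction over every pattern of the degrees $\deg\omega_i$ and every degenerate position, and then identify exactly which vertex of $\G v_0\cap\ell$ the pair of conditions $S_2\cup S_3$ selects (and that renormalising $S_0$ there changes nothing essential). For uniqueness the hard point is precisely the claim that two distinct vertices of $\G v_0$ on $\ell$ can never both be in axis position and compatible with the $\omega_2$-normalisation; this is where the asymmetric shape of $S_2$ and, above all, the equality condition in $S_3$ must be used in an essential way --- they are the sharp rendering, purely in terms of $\deg$ and leading coefficients, of ``$v_0$ is the reduced base vertex of $\ell$ and $\mathrm{lc}(\omega_2)=1$''.
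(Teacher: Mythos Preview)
Your outline has the right architecture---existence then uniqueness, using the tree picture---but both halves contain a genuine gap that you yourself flag without filling.

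\textbf{Existence.} Your iteration to reach $S_1$ is essentially the paper's continued-fraction reduction in geometric clothing and is fine. The trouble is the next sentence: ``after possibly replacing $v_0$ by the other vertex of $\Gamma v_0\cap\ell$ \ldots\ exactly one of $S_2,S_3$ holds''. There is in general no single ``other'' vertex; once you are in $S_1$ with $\deg\omega_2\ge\deg\omega_3$ (or with $\deg\omega_1=\deg\omega_2$ and equal leading coefficients), you may need to slide the basepoint many steps along $\ell$ towards $\omega_3$, and you have not said which $\gamma$'s effect this or why the slide terminates inside $S_1$. The paper does this concretely by expanding $\omega_3=[c_0;c_1,\ldots]$ and applying $\iota\circ u_{-c_k}\circ\cdots\circ\iota\circ u_{-c_0}$ until the first index $k$ with $b_k\ne c_k$; a short computation shows $\omega_1$ keeps negative degree throughout and the resulting triple lands in $S_1\cap(S_2\cup S_3)$. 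Your sketch needs an analogue of this step.

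\textbf{Uniqueness.} You correctly reduce to ``$v_0$ and $\gamma^{-1}v_0$ are two $\Gamma v_0$-vertices on $\ell$ in axis position; show they coincide'', and then say this ``is the crux'' and stop. That is exactly the content of the theorem, and $S_2\cup S_3$ by itself does not obviously single out one vertex of $\Gamma v_0\cap\ell$; one really must compute. The paper avoids your formulation by passing instead to the \emph{median} $m=p_{(\omega_1\omega_3)}(\omega_2)$, which is $\gamma$-equivariant. For any triple in $S_1\cap(S_2\cup S_3)$ one checks $m=x_{\deg\omega_2}$ lies on the standard axis, so $\gamma$ sends $x_{\deg\omega_2}$ to $x_{\deg\eta_2}$; the double-coset decomposition $G=\bigsqcup_{i\ge0}\Gamma h^iW$ then forces $\deg\omega_2=\pm\deg\eta_2$, and a direct look at the stabiliser of $x_n$ in $\Gamma$ pins $\gamma$ down to an upper-triangular, lower-triangular, or antidiagonal element with bounded off-diagonal degree. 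The conditions $\deg\eta_1<0<\deg\eta_3$ and $[\eta_2]_L=1$ kill the off-diagonal entry and the scalar, giving $\gamma=1$. This is the missing idea: use the median, not the basepoint, as the $\gamma$-invariant anchor, and then read off the shape of $\gamma$ from the explicit $\Gamma$-stabilisers of axis vertices rather than arguing abstractly about ``the reduced base vertex''.
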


In Section~\ref{sec:2}, we review the Bruhat-Tits tree of the group $PGL\!\left(2,\mathbb{F}_q\!\left(\!(t^{-1})\!\right)\right)$ with an explicit description of vertices and boundary at infinity. We prove that the set $S_0\cap S_1\cap(S_2\cup S_3)$ in Theorem~\ref{thm:main} contains at least one point of each $\Gamma$-orbit of $\mathbb{P}^1(\mathbf{K})^3_{\textrm{dist}}$ in Section~\ref{sec:3}. In Section~\ref{sec:4}, we complete the proof of the main theorem and further discuss the diagonal action on $\Gamma\backslash G$.

\section{Tree of $G$}\label{sec:2}
In this section, we review the Bruhat-Tits tree $\mathcal{T}$ of $G$. We give a concrete interpretation of vertices and edges of $\mathcal{T}$ without using Bruhat-Tits theory of general algebraic groups (see \cite{BT}).

Given an element $\displaystyle\alpha=\sum_{i=n}^{-\infty}a_it^i$ of $\mathbf{K}$ with $a_n\ne 0$, let us define
\begin{align*}
[\alpha]&=a_0+a_1t+\cdots+a_nt^n\\
\{\alpha\}&=a_{-1}t^{-1}+a_{-2}t^{-2}+\cdots \\
[\alpha]_L&=a_n \\
\deg \alpha&= n
\end{align*}
the polynomial part, fractional part, the leading term and the degree of $\alpha$, respectively.
Let $\mathcal{T}$ be the graph whose vertices are the elements of $G/W$, which we can describe as
$$\left[\left(\begin{array}{cc} t^n & f(t) \\ 0 & 1\end{array}\right)\right]W$$ for some integer $n$ and a rational function $f(t)\in t^{n+1}\mathbf{Z}$. Let $$\pi_n\colon t^{n}\mathbf{Z}\to t^{n+1}\mathbf{Z}$$ be the projection map which forgets the $t^n$ term. Two vertices 
$$\left[\left(\begin{array}{cc} t^{n_1} & f_1(t) \\ 0 & 1\end{array}\right)\right]W\textrm{ and }\left[\left(\begin{array}{cc} t^{n_2} & f_2(t) \\ 0 & 1\end{array}\right)\right]W$$ are adjacent to each other if and only if $|n_1-n_2|=1$ and $f_1$ and $f_2$ satisfy 
\[
 \left\{\begin{array}{lr}
          \displaystyle f_2(t)=\pi_{n_2}(f_1(t)), & \textrm{if }n_2=n_1+1\\
        f_2(t)=f_1(t)+ct^{n_1}, & \textrm{if }n_2=n_1-1 \\
        \end{array}\right.
  \]
for some $c\in \mathbb{F}_q$. It follows that the degree of every vertex of $\mathcal{T}$ is equal to $q+1$. Let us denote by $o$ the standard vertex 
$$\left[\left(\begin{array}{cc} 1 & 0 \\ 0 & 1\end{array}\right)\right]W.$$

\begin{de}\label{def:1} An isometry $r\colon \mathbb{Z}_{\ge 0}\to\mathcal{T}$ is called a \emph{parametrized geodesic ray} and an isometry $\ell\colon \mathbb{Z}\to\mathcal{T}$ is called a \emph{parametrized bi-infinite geodesic}. Two geodesic rays $\ell$ and $\ell'$ are said to be \emph{equivalent} if and only if $\{d_{\mathcal{T}}(\ell(n),\ell'(n))\colon n\in\mathbb{Z}_{>0}\}$ is bounded above. 
The \emph{Gromov boundary at infinity} of $\mathcal{T}$ is defined as the set of equivalence classes $[\ell]$ of geodesic ray $\ell$ starting from a fixed vertex $v$ of $\mathcal{T}$.
\end{de}
Then, the {Gromov boundary $\partial_\infty\!\mathcal{T}$ at infinity} of $\mathcal{T}$ can be identified with $\mathbf{K}\cup\{\infty\}$ (See Chapter 2 of \cite{Se}). Moreover, given any boundary point $\omega\in\partial_\infty\mathcal{T}$, there is a unique geodesic ray from the standard vertex $o$ to $\omega$. We also note that given any two distinct points $\omega_1$ and $\omega_2$ of $\partial_\infty\mathcal{T}$, there is a bi-infinite geodesic, which we will denote by $(\omega_1\omega_2)$, begins at $\omega_1$ and ends at $\omega_2$. 

%We also note that the Gromov boundary $\partial_\infty\!\mathcal{T}$ at infinity of $\mathcal{T}$ can be identified with $\mathbb{P}^1(\mathbf{K})$.

Let $\partial_\infty\!\mathcal{T}^{3}_{\textrm{dist}}$ be the set $$\{(\omega_1,\omega_2,\omega_3)\in(\partial_\infty\!\mathcal{T})^3\colon \omega_i\ne\omega_j\textrm{ for }1\le i\ne j\le 3\}$$ of distinct ordered triple points in $\partial_\infty\!\mathcal{T}$. Since two by two projective general linear group over a field $F$ acts simply transitively on $(\mathbb{P}^1(F))^3_\textrm{dist}$ by M\"{o}bius transformation, we have a bijection $\Phi\colon G\to \partial_\infty\!\mathcal{T}^{3}_{\textrm{dist}}\simeq\mathbb{P}^1(\mathbf{K})^3_{\textrm{dist}}$ (see also \cite{PS}) given by $$\Phi(g)=g\cdot(0,1,\infty).$$ 

\begin{figure}[H]
\begin{center}
\begin{tikzpicture}[every loop/.style={}]
  \tikzstyle{every node}=[inner sep=0pt]
  \node (-1) at (-0.4,0) {$\cdots$};
  \node (0) {$\bullet$} node [above=4pt] at (0,0) {$x_{-3}$};
  \node (2) at (1.5,0) {$\bullet$} node [above=4pt] at (1.5,0) {$x_{-2}$}; 
  \node (4) at (3,0) {$\bullet$}node [above=4pt] at (3,0) {$x_{-1}$}; 
  \node (6) at (4.5,0) {$\bullet$}node [above=4pt] at (4.5,0) {$x_0$}; 
  \node (8) at (6,0) {$\bullet$}node [above=4pt] at (6,0) {$x_1$}; 
  \node (10) at (7.5,0) {$\bullet$}node [above=4pt] at (7.5,0) {$x_2$}; 
  \node (11) at (9,0) {$\bullet$}node [above=4pt] at (9,0) {$x_3$}; 
  \node (13) at (9.4,0) {$\cdots$};
  \node (14) at (-0.4,-2) {$\omega_1$};
  \node (15) at (3,-2) {$\omega_2$};
  \node (16) at (9.4,-2) {$\omega_3$};
  \node (17) at (-0.3,-1.55) {};
  \node (18) at (3,-1.5) {};
  \node (19) at (9.3,-1.55) {};
  \node (20) at (6,-2.5) {$x_i=\left[\left(\begin{array}{cc} t^i & 0 \\ 0 & 1 \end{array}\right)\right]W$};
  \draw[dashed] (-0.4cm,-2cm) circle (0.5cm);
  \draw[dashed] (3cm,-2cm) circle (0.5cm);
 % \draw[dashed] (4.5cm,-2cm) circle (0.5cm);
  \draw[dashed] (9.4cm,-2cm) circle (0.5cm);

  \path[-] (0) edge node [above=4pt] {} (2)
 (2) edge node [above=4pt] {} (4)
 (4) edge node [above=4pt] {} (6)
 (6) edge node [above=4pt] {} (8)
 (8) edge node [above=4pt] {} (10)
 (10) edge node [above=4pt] {} (11)
 (0) edge node {} (17)
 (4) edge node {} (18)
 (0) edge node {} (0,-1)
 (0) edge node {} (0.2,-1)
 (2) edge node {} (1.3,-1)
 (2) edge node {} (1.5,-1)
 (2) edge node {} (1.7,-1)
 (4) edge node {} (2.8,-1)
 (4) edge node {} (3.2,-1)
 (6) edge node {} (4.3,-1)
 (6) edge node {} (4.5,-1)
 (6) edge node {} (4.7,-1)
 (8) edge node {} (5.8,-1)
 (8) edge node {} (6,-1)
 (8) edge node {} (6.2,-1)
 (10) edge node {} (7.3,-1)
 (10) edge node {} (7.5,-1)
 (10) edge node {} (7.7,-1)
 (11) edge node {} (8.8,-1)
 (11) edge node {} (9,-1)
 (11) edge node {} (19);
\end{tikzpicture}
%\vspace{0.5em}
\caption{$\deg(\omega_1)=-3$, $\deg(\omega_2)=-1$ and $\deg(\omega_3)=3$}
\label{Fig:ex1}
\end{center}
\end{figure}
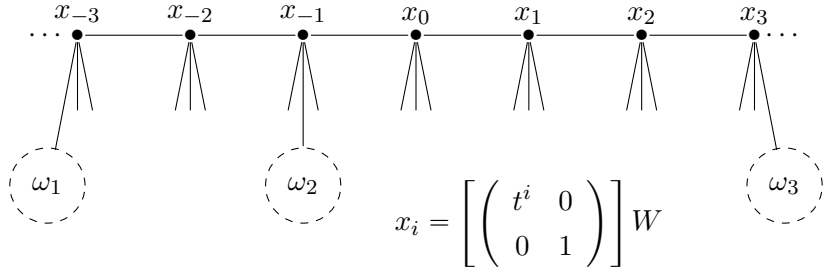

%--------------------------------------

% Section 3 (Fundamental domain)

%--------------------------------------

\section{Regular continued fraction and discrete geodesic flow}\label{sec:3}

In this section, we prove that the set of Theorem~\ref{thm:main} contains at least one point from each $\Gamma$-orbit. The main ingredients of the proof are the geometry of $\mathcal{T}$ and regular continued fraction expansion of elements in $\mathbf{K}$. %$\partial_\infty\!\mathcal{T}^{3}_{\textrm{dist}}$.
 
Every formal series $\alpha$ can be unqiuely written as 
$$\displaystyle \alpha=a_0+\dfrac{1}{a_1+\dfrac{1}{a_2+\dfrac{1}{\ddots}}}$$
for $a_0\in\mathbb{F}_q[t]$ and non-constant polynomials $a_i$ for $i\ge 1$,
 which we call a \emph{regular continued fraction} of $\alpha$. The $a_i$ are called the \emph{partial quotients} of $\alpha$ and we will write $\alpha=[a_0;a_1,a_2,\ldots]$.
 
\begin{prop}\label{prop:subset} Given $(\omega_1,\omega_2,\omega_3)\in\partial_\infty\!\mathcal{T}^{3}_{\textrm{dist}}$, there is a $\gamma\in \Gamma$ such that $$\gamma\cdot(\omega_1,\omega_2,\omega_3)\in S_0\cap S_1\cap(S_2\cup S_3)$$ where \begin{align*}
S_0&=\{(\omega_1,\omega_2,\omega_3)\in\partial_\infty\!\mathcal{T}^{3}_{\textrm{dist}}\colon[\omega_2]_L=1\} \\
S_1&=\{(\omega_1,\omega_2,\omega_3)\in\partial_\infty\!\mathcal{T}^{3}_{\textrm{dist}}\colon\deg\omega_1<0<\deg\omega_3\} \\
S_2&=\{(\omega_1,\omega_2,\omega_3)\in\partial_\infty\!\mathcal{T}^{3}_{\textrm{dist}}\colon\deg\omega_1\ne\deg\omega_2<\deg\omega_3\} \\
S_3&=\{(\omega_1,\omega_2,\omega_3)\in\partial_\infty\!\mathcal{T}^{3}_{\textrm{dist}}\colon\deg\omega_1=\deg\omega_2=\deg(\omega_1-\omega_2)\}.
\end{align*}
\end{prop}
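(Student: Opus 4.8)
The plan is to use the $\Gamma$-action to normalize the triple in three stages, each time killing one of the defining conditions while preserving the previously achieved ones. Note first that $\Gamma = PGL_2(\mathbf{Z})$ contains the translations $z \mapsto z + p(t)$ for $p \in \mathbf{Z}$, the inversion $z \mapsto 1/z$, and the scalings $z \mapsto c z$ for $c \in \mathbb{F}_q^\times$; these generate a large enough supply of moves. The degree function $\deg$ on $\mathbf{K}$ transforms predictably under these: translation by $p$ only affects a coordinate if that coordinate has degree $\le \deg p$, inversion sends $\deg \omega$ to $-\deg \omega$, and scaling by $c$ only changes leading coefficients, not degrees.

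First I would arrange $\deg\omega_1 < 0 < \deg\omega_3$, i.e.\ land in $S_1$. Geometrically this says the bi-infinite geodesic $(\omega_1\omega_3)$ passes through the standard vertex $o$, or rather that $o$ lies on the correct side: using the description of $\partial_\infty\mathcal{T} \simeq \mathbf{K}\cup\{\infty\}$ and the fact (stated in the excerpt) that there is a unique geodesic ray from $o$ to any boundary point, one sees that the $\Gamma$-orbit of any pair of distinct boundary points contains a pair with one point of negative degree and one of positive degree — this is exactly where the regular continued fraction expansion enters, since applying the generators of $\Gamma$ to push along the geodesic $(\omega_1\omega_3)$ corresponds to running the continued fraction algorithm, and the axis of $h$ (through the vertices $x_i = [\,\mathrm{diag}(t^i,1)\,]W$) is the reference geodesic with endpoints $0$ and $\infty$. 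Concretely: translate so $\omega_1$ has small degree, invert/translate as needed so that $\omega_3$ gains positive degree, and iterate; the continued fraction expansion of the relevant cross-ratio-type quantity terminates the procedure in finitely many steps because partial quotients are non-constant polynomials. Once in $S_1$, the stabilizer of this condition inside $\Gamma$ is small — essentially scalings $z\mapsto cz$ and the flip $z \mapsto 1/z$ composed with scalings, plus translations by constants — so I have limited remaining freedom.

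Second, with the $S_1$ condition fixed, I would use the residual freedom (translations by $\mathbb{F}_q$, which can only alter $\omega_2$ if $\deg\omega_2 = 0$ or if $\omega_2$ is itself a constant, together with the flip $z\mapsto 1/z$ which swaps the roles of $\omega_1,\omega_3$ and negates all degrees) to force either $S_2$ or $S_3$. The point is a dichotomy: either $\deg\omega_1 \ne \deg\omega_2$ (giving the first clause of $S_2$, since $\deg\omega_2 < \deg\omega_3$ already holds once $\deg\omega_2 \le 0 < \deg\omega_3$ — one must check $\deg \omega_2 < \deg\omega_3$, which may require a constant translation when $\deg\omega_2 = 0$), or $\deg\omega_1 = \deg\omega_2$, in which case the non-archimedean triangle inequality gives $\deg(\omega_1-\omega_2) \le \deg\omega_1$ with equality unless the leading terms of $\omega_1$ and $\omega_2$ coincide; if they do coincide one subtracts them off via a constant-degree translation (legitimate because both have degree equal to $\deg\omega_1 < 0$, wait — here is the subtlety, if $\deg\omega_1 < 0$ the translation must be by a polynomial, which changes nothing, so instead one uses the flip). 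I expect this bookkeeping of exactly which generator restores exactly which condition to be the main obstacle: the conditions $S_1, S_2, S_3$ are not independent, and one must verify the moves used in stage two do not destroy $S_1$. Finally, $S_0$ is achieved for free at the very end: once everything else is fixed, the only remaining $\Gamma$-element preserving all of $S_1 \cap (S_2\cup S_3)$ is a scaling $z \mapsto cz$, and choosing $c = [\omega_2]_L^{-1}$ makes the leading coefficient of $\omega_2$ equal to $1$ without touching any degree. Assembling these three stages proves at least one point of each orbit lies in $S_0 \cap S_1 \cap (S_2 \cup S_3)$; uniqueness is deferred to Section~\ref{sec:4} and is not claimed here.
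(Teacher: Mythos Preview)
Your three–stage plan (first land in $S_1$ via the continued–fraction algorithm, then adjust to $S_2\cup S_3$, then scale to $S_0$) is exactly the strategy of the paper, and your Stages 1 and 3 match the paper's argument. The problem is Stage 2.

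The moves you allow yourself there are too weak. Scalings $z\mapsto cz$ fix all degrees. Translations by constants only touch the degree-$0$ term, so they do nothing when $\deg\omega_1,\deg\omega_2<0$. And the inversion $z\mapsto 1/z$ does \emph{not} ``swap the roles of $\omega_1,\omega_3$'': it acts coordinatewise, sending a triple with $\deg\omega_1<0<\deg\omega_3$ to one with $\deg(1/\omega_1)>0>\deg(1/\omega_3)$, i.e.\ it takes you out of $S_1$. Hence your residual toolkit cannot handle two genuine cases that arise after Stage 1: (a) $\deg\omega_2\ge\deg\omega_3$ (you implicitly assume $\deg\omega_2\le 0$, but nothing guarantees this); and (b) $\deg\omega_1=\deg\omega_2<0$ with equal leading coefficients, where you yourself note that subtraction by a polynomial is vacuous and the flip does not help.

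The paper's resolution is not to treat Stages 1 and 2 as independent. After reaching $a_0=0$, $c_0\ne 0$ (i.e.\ $S_1$), one does not stop: one continues to iterate $\iota\circ u_{-c_k}$ along the continued fraction of $\omega_3$, tracking $\omega_2$ until the partial quotients of $\omega_2$ and $\omega_3$ first differ, say at index $k$. A short computation with $[\,\cdot\,;\cdot,\ldots]$ then shows the resulting triple lies in $S_1\cap S_2$ or $S_1\cap S_3$, depending on how $b_k$ compares with $c_k$. So the missing idea is precisely that the same continued-fraction machinery that achieves $S_1$ also achieves $S_2\cup S_3$, provided you iterate along $\omega_3$ rather than stopping as soon as $\omega_1$ and $\omega_3$ separate.
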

\begin{proof} Let \begin{align*}\omega_1&=[a_0;a_1,a_2,a_3,\ldots],\\ 
\omega_2&=[b_0;b_1,b_2,b_3,\ldots],\quad (a_i,b_i,c_i\in\mathbf{Z}\cup\{\infty\})\\
\omega_3&=[c_0;c_1,c_2,c_3,\ldots]
\end{align*} 
be the regular continued fraction expansions of $\omega_i$, $i=1,2,3$, respectively. Note that $$\iota=\left[\left(\begin{array}{cc} 0 & 1 \\ 1 & 0 \end{array}\right)\right],\sigma_c=\left[\left(\begin{array}{cc} c & 0 \\ 0 & 1 \end{array}\right)\right]\textrm{ and }u_f=\left[\left(\begin{array}{cc} 1 & f(t) \\ 0 & 1\end{array}\right)\right]$$ belong to $\Gamma$ for every $d\in\mathbb{F}_q$ and $f(t)\in \mathbf{Z}$. The actions are given by $$\iota(\omega_1,\omega_2,\omega_3)=\left(\frac{1}{\omega_1},\frac{1}{\omega_2},\frac{1}{\omega_3}\right)$$ $$\sigma_c(\omega_1,\omega_2,\omega_3)=(c\omega_1,c\omega_2,c\omega_3)$$ and $$u_f(\omega_1,\omega_2,\omega_3)=(\omega_1+f,\omega_2+f,\omega_3+f).$$ 
Given any $$(\omega_1,\omega_2,\omega_3)\in  S_1\cap (S_2\cup S_3),$$ we have $\sigma_c(\omega_1,\omega_2,\omega_3)\in S_0\cap S_1\cap (S_2\cup S_3)$ for $c=([\omega_2]_L)^{-1}\in \mathbb{F}_q$. Thus, it is enough to find a $\gamma\in \Gamma$ such that $\gamma\cdot(\omega_1,\omega_2,\omega_3)\in S_1\cap (S_2\cup S_3)$.

Since $\omega_1\ne\omega_3$, there exists $j\ge 0$ such that $a_j\ne c_j$ and hence we may assume without loss of generality that $a_0=0$ and $c_0\ne 0$ by applying $\iota$ and $u_{-a_0},\ldots,u_{-a_{j}}$. Let 
\begin{align*}\omega_1&=[0;a_1,a_2,a_3,\ldots],\\ 
\omega_2&=[b_0;b_1,b_2,b_3,\ldots],\quad (a_i,b_i,c_i\in\mathbf{Z}\cup\{\infty\})\\
\omega_3&=[c_0;c_1,c_2,c_3,\ldots]
\end{align*} 
If $b_0\ne 0$ and $\deg b_0<\deg c_0$, then $(\omega_1,\omega_2,\omega_3)\in S_1\cap S_2$. If $\deg b_0> \deg c_0$, then 
\begin{align*}
\iota\circ u_{-c_0}(\omega_1)&=[0;-c_0,a_1,a_2,\ldots] \\
\iota\circ u_{-c_0}(\omega_2)&=[0;b_0-c_0,b_1,b_2,\ldots] \\
\iota\circ u_{-c_0}(\omega_3)&=[c_1;c_2,c_3\ldots].
\end{align*}
In this case, $\iota\circ u_{-c_0}(\omega_1,\omega_2,\omega_3)\in S_1\cap S_2$. If $\deg b_0=\deg c_0$ and $b_0\ne c_0$, then $\iota\circ u_{-c_0}(\omega_1,\omega_2,\omega_3)$ belongs to $S_1\cap S_2$ or $S_1\cap S_3$ depending on whether $[b_0]_L=[c_0]_L$ or not. If $b_0=c_0$ and $k>0$ is the smallest integer such that $b_k\ne c_k$, then we have 
\begin{align*}
\iota\circ u_{-c_k}\circ\cdots\iota\circ u_{-c_0}(\omega_1)&=[0;-c_k,\ldots,-c_0,a_1,a_2,\ldots] \\
\iota\circ u_{-c_k}\circ\cdots\iota\circ u_{-c_0}(\omega_2)&=[0;b_k-c_k,b_{k+1},b_{k+2},\ldots] \\
\iota\circ u_{-c_k}\circ\cdots\iota\circ u_{-c_0}(\omega_3)&=[c_{k+1};c_{k+2},c_{k+3},\ldots]
\end{align*}
and hence it can be reduced to one of the previous cases. Thus, this proves the existence of such $\gamma$.
\end{proof}

\begin{figure}[H]
\begin{center}
\begin{tikzpicture}[every loop/.style={}]
  \tikzstyle{every node}=[inner sep=0pt]
  \node (-1) at (-0.4,0) {$\cdots$};
  \node (0) {$\bullet$} node [above=4pt] at (0,0) {$x_{-3}$};
  \node (2) at (1.5,0) {$\bullet$} node [above=4pt] at (1.5,0) {$x_{-2}$}; 
  \node (4) at (3,0) {$\bullet$}node [above=4pt] at (3,0) {$x_{-1}$}; 
  \node (6) at (4.5,0) {$\bullet$}node [above=4pt] at (4.5,0) {$x_0$}; 
  \node (8) at (6,0) {$\bullet$}node [above=4pt] at (6,0) {$x_1$}; 
  \node (10) at (7.5,0) {$\bullet$}node [above=4pt] at (7.5,0) {$x_2$}; 
  \node (11) at (9,0) {$\bullet$}node [above=4pt] at (9,0) {$x_3$}; 
  \node (13) at (9.4,0) {$\cdots$};
  \node (14) at (-0.4,-2) {$\omega_1$};
  \node (15) at (3,-2) {$\omega_2$};
  \node (16) at (9.4,-2) {$\omega_3$};
  \node (17) at (-0.3,-1.55) {};
  \node (18) at (3,-1.5) {};
  \node (19) at (9.3,-1.55) {};
  \node (20) at (6,-1.5) {$x_i=\left[\left(\begin{array}{cc} t^i & 0 \\ 0 & 1 \end{array}\right)\right]W$};
  \draw[dashed] (-0.4cm,-2cm) circle (0.5cm);
  \draw[dashed] (3cm,-2cm) circle (0.5cm);
 % \draw[dashed] (4.5cm,-2cm) circle (0.5cm);
  \draw[dashed] (9.4cm,-2cm) circle (0.5cm);

  \path[-] (0) edge node [above=4pt] {} (2)
 (2) edge node [above=4pt] {} (4)
 (4) edge node [above=4pt] {} (6)
 (6) edge node [above=4pt] {} (8)
 (8) edge node [above=4pt] {} (10)
 (10) edge node [above=4pt] {} (11)
 (0) edge node {} (17)
 (4) edge node {} (18)
 (11) edge node {} (19);
\end{tikzpicture}
%\vspace{0.5em}
\caption{$(\omega_1,\omega_2,\omega_3)\in S_0\cap S_1\cap S_2$}
\label{Fig:ex2}
\end{center}
\end{figure}

\vspace{-1em}

\begin{figure}[H]
\begin{center}
\begin{tikzpicture}[every loop/.style={}]
  \tikzstyle{every node}=[inner sep=0pt]
  \node (-1) at (-0.4,0) {$\cdots$};
  \node (0) {$\bullet$} node [above=4pt] at (0,0) {$x_{-3}$};
  \node (2) at (1.5,0) {$\bullet$} node [above=4pt] at (1.5,0) {$x_{-2}$}; 
  \node (4) at (3,0) {$\bullet$}node [above=4pt] at (3,0) {$x_{-1}$}; 
  \node (6) at (4.5,0) {$\bullet$}node [above=4pt] at (4.5,0) {$x_0$}; 
  \node (8) at (6,0) {$\bullet$}node [above=4pt] at (6,0) {$x_1$}; 
  \node (10) at (7.5,0) {$\bullet$}node [above=4pt] at (7.5,0) {$x_2$}; 
  \node (11) at (9,0) {$\bullet$}node [above=4pt] at (9,0) {$x_3$}; 
  \node (13) at (9.4,0) {$\cdots$};
  \node (14) at (-0.6,-2) {$\omega_1$};
  \node (15) at (0.6,-2) {$\omega_2$};
  \node (16) at (9.4,-2) {$\omega_3$};
  \node (17) at (-0.3,-1.55) {};
  \node (18) at (0.3,-1.55) {};
  \node (19) at (9.3,-1.55) {};
  \node (20) at (6,-1.5) {$x_i=\left[\left(\begin{array}{cc} t^i & 0 \\ 0 & 1 \end{array}\right)\right]W$};
  \draw[dashed] (-0.6cm,-2cm) circle (0.5cm);
  \draw[dashed] (0.6cm,-2cm) circle (0.5cm);
 % \draw[dashed] (4.5cm,-2cm) circle (0.5cm);
  \draw[dashed] (9.4cm,-2cm) circle (0.5cm);

  \path[-] (0) edge node [above=4pt] {} (2)
 (2) edge node [above=4pt] {} (4)
 (4) edge node [above=4pt] {} (6)
 (6) edge node [above=4pt] {} (8)
 (8) edge node [above=4pt] {} (10)
 (10) edge node [above=4pt] {} (11)
 (0) edge node {} (17)
 (0) edge node {} (18)
 (11) edge node {} (19);
\end{tikzpicture}
%\vspace{0.5em}
\caption{$(\omega_1,\omega_2,\omega_3)\in S_0\cap S_1\cap S_3$}
\label{Fig:ex3}
\end{center}
\end{figure}
Now let us introduce some notions for geometry of trees. This is useful when we construct a strong fundamental domain of $\Gamma\backslash\partial_\infty\!\mathcal{T}^{3}_{\textrm{dist}}$ as well as a factor $(\Gamma\backslash\mathcal{GT},\phi)$ of $(\Gamma\backslash G,\phi_a)$ which is similar to the geodesic flow system of the unit tangent bundle on hyperbolic surfaces. Here, $\phi_h\colon \Gamma\backslash G\to \Gamma\backslash G$ is the right translation map given by $\phi_h(x)=xh$.

Recall that a parametrized bi-infinite geodesic in $\mathcal{T}$ is an isometry $\ell\colon\mathbb{Z}\to\mathcal{T}$. (See the below of Definition~\ref{def:1}). Note that a bi-infinite geodesic is completely determined by two distinct points $\ell(-\infty)$ and $\ell(\infty)$ of $\partial_\infty\!\mathcal{T}$ and the marked vertex $\ell(0)\in(\ell(-\infty)\ell(\infty))$.

 Let $\mathcal{GT}$ be the set of all parametrized bi-infinite geodesics in $\mathcal{T}$. Given a tuple $(\omega_1,\omega_2,\omega_3)\in\partial_\infty\!\mathcal{T}^{3}_{\textrm{dist}}$, we have the map $p_{(\omega_1\omega_3)}(\omega_2)$ be the unique vertex projected from $\omega_2$ to the (unparametrized) bi-infinite geodesic $(\omega_1\omega_3)$. For example, $p_{(\omega_1\omega_3)}(\omega_2)=x_{-1}$ for Figure~\ref{Fig:ex2} and  $p_{(\omega_1\omega_3)}(\omega_2)=x_{-3}$ for Figure~\ref{Fig:ex3}. 

We define a factor map $\Theta\colon\partial_\infty\!\mathcal{T}^{3}_{\textrm{dist}}\to\mathcal{GT}$ by $\Theta(\omega_1,\omega_2,\omega_3)=\ell$ where $\ell$ is the unique parametrized bi-infinite geodesic satisfying $\ell(-\infty)=\omega_1$, $\ell(\infty)=\omega_3$, and $\ell(0)=p_{(\omega_1\omega_3)}(\omega_2)$. Let $\phi\colon\mathcal{GT}\to\mathcal{GT}$ be the discrete geodesic flow given by $\phi(\ell)(n)=\ell(n+1)$. Taking the quotient by the discrete subgroup $\Gamma$ and attaching the factor map $\Theta$, we obtain the following commutative diagram.

\begin{equation*}
\begin{aligned}
\xymatrix{ \Gamma\backslash G \ar[r]^{\,\, \phi_h\quad} \ar[d]_{\Phi} & \Gamma\backslash G \ar[d]_{\Phi}   \\ 
\Gamma\backslash \partial_\infty\!\mathcal{T}^{3}_{\textrm{dist}} \ar[r]^{\,\, \phi_h\quad} \ar[d]_{\Theta} & \Gamma\backslash \partial_\infty\!\mathcal{T}^{3}_{\textrm{dist}} \ar[d]_{\Theta}   \\ 
\Gamma\backslash \mathcal{GT} \ar[r]^{\,\, \phi \quad} & \Gamma\backslash \mathcal{GT} }
\end{aligned}
\end{equation*}

\section{A fundamental domain for $\Gamma\backslash G$}\label{sec:4}
 In this section, we give a strong fundamental domain for the action of $\Gamma$ on $\mathbb{P}^1(\mathbf{K})^{3}_{\textrm{dist}}$. We recall that $G$ may be identified with $\mathbb{P}^1(\mathbf{K})^{3}_{\textrm{dist}}$ via the map $\Phi$ in Section~\ref{sec:1}.

\begin{thm}\label{thm:fundom} The subset $S_0\cap S_1\cap(S_2\cup S_3)$ of $\partial_\infty\!\mathcal{T}^3_{\textrm{dist}}$ introduced in Proposition~\ref{prop:subset} is a strong fundamental domain of $\Gamma\backslash \partial_\infty\!\mathcal{T}^{3}_{\textrm{dist}}$.
\end{thm}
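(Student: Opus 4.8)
The plan is to upgrade Proposition~\ref{prop:subset}, which already shows that every $\Gamma$-orbit meets $D:=S_0\cap S_1\cap(S_2\cup S_3)$, to the statement that each orbit meets $D$ in \emph{exactly} one point. So the entire content of the proof is uniqueness: if $(\omega_1,\omega_2,\omega_3)\in D$ and $\gamma\in\Gamma$ with $\gamma\cdot(\omega_1,\omega_2,\omega_3)\in D$, then $\gamma$ is the identity in $\Gamma=PGL_2(\mathbf{Z})$. First I would translate the defining conditions of $D$ into statements about the bi-infinite geodesic $(\omega_1\omega_3)$ and the marked vertex $p_{(\omega_1\omega_3)}(\omega_2)$ on it, i.e.\ about the image $\Theta(\omega_1,\omega_2,\omega_3)\in\mathcal{GT}$. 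Using the explicit description of vertices $\bigl[\bigl(\begin{smallmatrix}t^n & f\\0&1\end{smallmatrix}\bigr)\bigr]W$ and the adjacency rule from Section~\ref{sec:2}, the condition $S_1$ ($\deg\omega_1<0<\deg\omega_3$) says exactly that the geodesic $(\omega_1\omega_3)$ passes through the standard vertex $o=x_0$; indeed $o$ lies on $(\omega_1\omega_3)$ iff $\omega_1$ and $\omega_3$ lie in distinct components of $\mathcal{T}\setminus\{o\}$, which unwinds to $\deg\omega_1<0$ and $\deg\omega_3>0$ (the branch through $x_1$ corresponds to positive degree, the branches through the $q$ vertices adjacent to $o$ "below" correspond to the polynomial part being $0$ and degree $<0$). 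Then $S_2\cup S_3$ pins down the marked vertex $p_{(\omega_1\omega_3)}(\omega_2)$: I would check that $(\omega_1,\omega_2,\omega_3)\in S_1$ forces $p_{(\omega_1\omega_3)}(\omega_2)\in\{x_0,x_{-1},x_{-2},\dots\}$, that membership in $S_2$ means $p_{(\omega_1\omega_3)}(\omega_2)=x_{j}$ with $j=\min(\deg\omega_1,\deg\omega_2)<0$ actually lands at $x_{-1}$ or lower while being "off the $x_0$ branch" in a controlled way, and $S_3$ corresponds to $p_{(\omega_1\omega_3)}(\omega_2)=x_{\deg\omega_1}$; the upshot should be a clean geometric characterization: $D$ is precisely the set of triples whose associated geodesic passes through $o$ with a prescribed position of $\omega_2$ relative to $o$, plus the normalization $[\omega_2]_L=1$ from $S_0$.

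Next I would exploit $S_0$ and the simply transitive action. The key structural fact is that $\Gamma$ acts on $\mathcal{T}$ with fundamental domain a ray (Serre, \cite{Se}): the quotient $\Gamma\backslash\mathcal{T}$ is the half-line with vertices $\bar x_0,\bar x_1,\bar x_2,\dots$, where $\bar x_n$ is the image of $x_n=\bigl[\bigl(\begin{smallmatrix}t^n&0\\0&1\end{smallmatrix}\bigr)\bigr]W$, and the stabilizer of $x_n$ in $\Gamma$ is a specific finite group. So the single constraint "$\Theta(\omega_1,\omega_2,\omega_3)$ passes through $o$ and $p_{(\omega_1\omega_3)}(\omega_2)=o$" (case $S_2$ with $j=0$ — or more precisely whichever marked vertex the conditions force) cuts the freedom down to $\mathrm{Stab}_\Gamma(o)$, which is $PGL_2(\mathbb{F}_q)$ (the matrices over the constant field); and on the remaining cases where the marked vertex is $x_{-n}$, $n>0$, conjugating by $h^{-n}$ reduces to the stabilizer of that vertex. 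Then the two \emph{directions} $\omega_1,\omega_3$ of the geodesic through the marked vertex, together with $\omega_2$, are three distinct boundary points, and $PGL_2$ over the relevant residue data acts on the $q+1$ directions at the vertex; the conditions defining $S_1$ and $S_2\cup S_3$ isolate which of the $q+1$ branches $\omega_1$, $\omega_3$, $\omega_2$ emanate into, and $S_0$ ($[\omega_2]_L=1$) kills the last scalar ambiguity $\sigma_c$. Running through the finitely many residual group elements and checking that only the identity preserves all of $S_0,S_1,S_2\cup S_3$ simultaneously finishes uniqueness.

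Concretely, the cleanest route is probably: (i) show $\Theta$ restricted to $D$ is injective into $\mathcal{GT}$ up to the fiber data, i.e.\ reduce to showing that two triples in $D$ with the \emph{same} geodesic image and the same marked vertex must be equal — this is immediate because once $\omega_1=\ell(-\infty)$, $\omega_3=\ell(\infty)$ are fixed, $S_2\cup S_3$ together with $\omega_2$'s position forces $\omega_2$ up to the residual scalar, and $S_0$ removes the scalar; (ii) show no nontrivial $\gamma\in\Gamma$ sends one $D$-triple to another by arguing $\gamma$ must fix the marked vertex $v$ (since both geodesics pass through $v$ at marked position $0$ and through $o$, and there is a unique vertex on a geodesic through $o$ at the prescribed combinatorial position relative to $o$), hence $\gamma\in\mathrm{Stab}_\Gamma(v)$, a finite group conjugate into $PGL_2(\mathbb{F}_q)$, and then the $S_0,S_1$ conditions force $\gamma=1$. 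I expect the main obstacle to be step (ii) in the cases $S_3$ and the "$\deg\omega_1=\deg\omega_2$" boundary between $S_2$ and $S_3$: there the marked vertex sits strictly below $o$ and one must carefully verify, using the adjacency rules and the continued-fraction bookkeeping from the proof of Proposition~\ref{prop:subset}, that the stabilizer elements which could a priori permute the relevant branches (and thus swap the roles forcing $S_2$ versus $S_3$, or alter which branch $\omega_3$ lies in relative to the positive-degree branch $x_1$) are exactly ruled out by requiring the image to again satisfy $\deg\omega_3>0$ and $[\omega_2]_L=1$. Handling those boundary overlaps — and confirming that $S_2$ and $S_3$ are disjoint on the nose, so "$S_2\cup S_3$" does not secretly double-count — is where the real care is needed; everything else is unwinding the dictionary between boundary points, continued fractions, and vertices already set up in Sections~\ref{sec:2}–\ref{sec:3}.
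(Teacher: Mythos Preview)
Your overall strategy is sound and close in spirit to the paper's: translate $S_1\cap(S_2\cup S_3)$ into the statement that the marked vertex $p_{(\omega_1\omega_3)}(\omega_2)$ lies on the standard axis $\{x_n\}$, then pin down $\gamma$ via the stabilizer of that vertex and kill the residual freedom with $S_0$. But your step (ii) contains a genuine gap. You assert that $\gamma$ must \emph{fix} the marked vertex $v$, arguing loosely that ``there is a unique vertex on a geodesic through $o$ at the prescribed combinatorial position.'' This is not justified: the marked vertex of $(\omega_1,\omega_2,\omega_3)$ is $x_{\deg\omega_2}$ and that of $(\eta_1,\eta_2,\eta_3)$ is $x_{\deg\eta_2}$, and a priori $\deg\omega_2\neq\deg\eta_2$. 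Your invocation of the ray structure of $\Gamma\backslash\mathcal{T}$ (equivalently the double-coset decomposition $G=\bigsqcup_{i\ge 0}\Gamma h^iW$) only yields $|\deg\omega_2|=|\deg\eta_2|$, not equality. The sign-flip case $\deg\omega_2=-\deg\eta_2\neq 0$ must be handled separately, and nothing in your outline addresses it.

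The paper resolves this by an explicit three-case analysis on the matrix form of $\gamma$. When $\deg\omega_2=\deg\eta_2\ge 0$ one finds $\gamma$ upper triangular with bounded off-diagonal entry, and $\deg\eta_1<0$ together with $[\omega_2]_L=[\eta_2]_L=1$ forces $\gamma=1$; the case $\deg\omega_2=\deg\eta_2<0$ is dual (lower triangular, use $\deg\eta_3>0$). In the sign-flip case $\gamma$ or $\gamma^{-1}$ has the shape $\bigl[\begin{smallmatrix}0&d\\a&b\end{smallmatrix}\bigr]$, and one computes $\deg(\gamma\cdot\omega_3)<0$, contradicting $S_1$. This last step is the piece entirely missing from your plan, and it is not something the stabilizer-of-$v$ reasoning can supply, since in that case $\gamma$ does \emph{not} stabilize any single $x_n$. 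I would also caution that your geometric description of where $S_2$ and $S_3$ place the marked vertex is imprecise (the correct statement is simply $p_{(\omega_1\omega_3)}(\omega_2)=x_{\deg\omega_2}$ in both cases); tightening that would make the later arguments cleaner.
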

\begin{proof}

Let us assume that $\gamma\cdot(\omega_1,\omega_2,\omega_3)=(\eta_1,\eta_2,\eta_3)$ and 
$$(\omega_1,\omega_2,\omega_3),(\eta_1,\eta_2,\eta_3)\in S_0\cap S_1\cap (S_2\cup S_3).$$
Then, $\gamma$ maps $p_{(\omega_1\omega_3)}(\omega_2)$ to $p_{(\eta_1\eta_3)}(\eta_2)$. Since $(\omega_1,\omega_2,\omega_3)$ and $(\eta_1,\eta_2,\eta_3)$ are contained in $S_1\cap(S_2\cup S_3)$, it follows that $p_{(\omega_1\omega_3)}(\omega_2)$ and $p_{(\eta_1\eta_3)}(\eta_2)$ are of the form
$$\left[\left(\begin{array}{cc} t^{\deg{\omega_2}} & 0 \\ 0 & 1 \end{array}\right)\right]W\quad\textrm{and}\quad\left[\left(\begin{array}{cc} t^{\deg{\eta_2}} & 0 \\ 0 & 1 \end{array}\right)\right]W,$$ respectively. Since $G$ is the disjoint union $$G=\underset{i=0}{\overset{\infty}{\sqcup}} \Gamma \left[\left(\begin{array}{cc} t^i & 0 \\ 0 & 1 \end{array}\right)\right]W$$ of double cosets with respect to $\Gamma$ and $W$, this implies that 
$\deg(\omega_2)=\pm\deg(\eta_2)$.

If $\deg(\omega_2)=\deg(\eta_2)\ge 0$, then $$\gamma=\left[\left(\begin{array}{cc} a & b \\ 0 & d \end{array}\right)\right]$$ for some $a,d\in\mathbb{F}_q^\times$ and $b\in \mathbf{Z}$ with $\deg(b)\le \deg(\omega_2)$. Since $$\deg(\eta_1)=\deg(\gamma\cdot\omega_1)=\deg(ad^{-1}\omega_1+d^{-1}b)<0,$$ we must have $b=0$. Moreover, $$[\omega_2]_L=[\eta_2]_L=[ad^{-1}\omega_2]_L=1$$ implies that 
$$\gamma=\left[\left(\begin{array}{cc} 1 & 0 \\ 0 & 1 \end{array}\right)\right].$$

If $\deg(\omega_2)=\deg(\eta_2)<0$, then $$\gamma=\left[\left(\begin{array}{cc} a & 0 \\ c & d \end{array}\right)\right]$$ for some $a,d\in\mathbb{F}_q^\times$ and $c\in\mathbf{Z}$ with $\deg(c)\le-\deg(\omega_2)$. The positivity of $\deg(\eta_3)$ and the similar argument implies that $\gamma$ is the identity.

If $\deg(\omega_2)=-\deg(\eta_2)$, then either $\gamma$ or $\gamma^{-1}$ is of the form
$$\left[\left(\begin{array}{cc}0 & d \\ a & b \end{array}\right)\right]$$ for some $a,d\in\mathbb{F}_q^\times$ and $\deg(b)\le|\deg(\omega_2)|$. Then $\deg(\eta_3)=\deg(d(a\omega_3+b)^{-1})<0$ which is impossible. Therefore, $\gamma$ must be an identity. Together with Proposition~\ref{prop:subset}, we can conclude that $S_0\cap S_1\cap(S_2\cup S_3)$ is a fundamental domain.
\end{proof}

Recall that $\phi_h$ is the right translation map $x\mapsto xh$ from $\Gamma\backslash G$ to itself. We discuss the system $(\Gamma\backslash G,\phi_h)$ in the remaining part. Let $\tau_h\colon G\to G $ be the right translation map $\tau_h(g)=gh$. 
\begin{prop}\label{prop:commuting}
Let $\varphi_h\colon \partial_\infty\!\mathcal{T}^{3}_{\textrm{dist}}\to \partial_\infty\!\mathcal{T}^{3}_{\textrm{dist}}$ be the map given by $$\varphi_h(\omega_1,\omega_2,\omega_3)=\left(\omega_1,\frac{(\omega_2-\omega_1)\omega_3t+(\omega_3-\omega_2)\omega_1}{(\omega_2-\omega_1)t+\omega_3-\omega_2},\omega_3\right),$$ then the following commutative diagram holds.
\begin{equation*}
\begin{aligned}
\xymatrix{ G \ar[r]^{\,\, \tau_h\quad} \ar[d]_{\Phi} & G \ar[d]_{\Phi}   \\ 
\partial_\infty\!\mathcal{T}^{3}_{\textrm{dist}} \ar[r]^{\,\, \varphi_h \quad} & \partial_\infty\!\mathcal{T}^{3}_{\textrm{dist}} }
\end{aligned}
\end{equation*}
\end{prop}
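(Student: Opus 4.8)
The plan is to verify the set-theoretic identity $\Phi\circ\tau_h=\varphi_h\circ\Phi$ directly; since $\Phi$ is a bijection (Section~\ref{sec:1}), this is exactly the assertion that the square commutes. Fix $g\in G$ and set $(\omega_1,\omega_2,\omega_3)=\Phi(g)=g\cdot(0,1,\infty)$. First I would observe that $h=\left[\begin{pmatrix} t & 0 \\ 0 & 1\end{pmatrix}\right]$ acts on $\mathbb{P}^1(\mathbf{K})\simeq\partial_\infty\!\mathcal{T}$ by the M\"obius transformation $z\mapsto tz$, so that $h\cdot(0,1,\infty)=(0,t,\infty)$. Since $G$ acts on the left, $\Phi(\tau_h(g))=(gh)\cdot(0,1,\infty)=g\cdot\bigl(h\cdot(0,1,\infty)\bigr)=g\cdot(0,t,\infty)=\bigl(g\cdot 0,\,g\cdot t,\,g\cdot\infty\bigr)=(\omega_1,\,g\cdot t,\,\omega_3)$. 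Thus the whole statement reduces to identifying $g\cdot t$ with the middle coordinate of $\varphi_h(\omega_1,\omega_2,\omega_3)$.

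For that identification I would write the M\"obius transformation $g$ explicitly in terms of the triple $(\omega_1,\omega_2,\omega_3)$. By simple transitivity of $PGL_2$ on distinct triples, $g$ is represented, up to a scalar, by the matrix
\[
M=\begin{pmatrix}\omega_3(\omega_2-\omega_1) & \omega_1(\omega_3-\omega_2)\\ \omega_2-\omega_1 & \omega_3-\omega_2\end{pmatrix},
\]
which one checks by evaluating at $0,1,\infty$: indeed $M\cdot 0=\omega_1$, $M\cdot\infty=\omega_3$, and $M\cdot 1=\dfrac{\omega_3(\omega_2-\omega_1)+\omega_1(\omega_3-\omega_2)}{(\omega_2-\omega_1)+(\omega_3-\omega_2)}=\dfrac{\omega_2(\omega_3-\omega_1)}{\omega_3-\omega_1}=\omega_2$. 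Evaluating $M$ at $t$ then yields
\[
g\cdot t=\frac{(\omega_2-\omega_1)\omega_3\,t+(\omega_3-\omega_2)\omega_1}{(\omega_2-\omega_1)\,t+(\omega_3-\omega_2)},
\]
which is precisely the second coordinate of $\varphi_h(\omega_1,\omega_2,\omega_3)$. Combining with the first paragraph gives $\Phi(\tau_h(g))=\varphi_h(\omega_1,\omega_2,\omega_3)=\varphi_h(\Phi(g))$, completing the argument. One may reach the same point via cross-ratio invariance: $g\cdot t$ is the unique $w$ with $g^{-1}(w)=t$, where $g^{-1}$ is the M\"obius map $w\mapsto\frac{(w-\omega_1)(\omega_2-\omega_3)}{(w-\omega_3)(\omega_2-\omega_1)}$, and solving that equation for $w$ reproduces the displayed formula.

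I do not anticipate a genuine obstacle; the only thing requiring care is the bookkeeping when one of the $\omega_i$ equals $\infty$, or when a factor such as $\omega_2-\omega_1$ or $(\omega_2-\omega_1)t+\omega_3-\omega_2$ vanishes. In those cases the matrix $M$ and the formula for $g\cdot t$ must be read projectively after clearing denominators, and the identity still holds because both sides are the same rational function of $(\omega_1,\omega_2,\omega_3,t)$ and agree generically. It is also worth remarking that $\varphi_h$ does map $\partial_\infty\!\mathcal{T}^{3}_{\textrm{dist}}$ into itself: this is automatic once the identity is established, since then $\varphi_h=\Phi\circ\tau_h\circ\Phi^{-1}$ is a composition of bijections.
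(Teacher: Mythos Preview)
Your proof is correct and follows essentially the same route as the paper: both reduce the commutativity to computing $g\cdot t$ after observing $\Phi(gh)=g\cdot(0,t,\infty)=(\omega_1,g\cdot t,\omega_3)$, and then express the entries of $g$ in terms of $(\omega_1,\omega_2,\omega_3)$ to obtain the displayed rational function. The only cosmetic difference is that the paper solves for $a,b,c,d$ and splits into the cases $c=0$ (i.e.\ $\omega_3=\infty$) and $c\ne 0$, whereas you write down the explicit representative $M$ at once and relegate the degenerate cases to a projective/genericity remark; both treatments are valid.
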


\begin{proof}

Suppose that $(\omega_1,\omega_2,\omega_3)=g\cdot (0,1,\infty)$ with $g=\left(\begin{array}{cc} a & b \\ c & d\end{array}\right)$.
Then, 
\begin{align*}
&\Phi\circ\phi_h\circ\Phi^{-1}(\omega_1,\omega_2,\omega_3)=\Phi\circ\phi_h(g)=\Phi(gh)=g\cdot(0,t,\infty)=(\omega_1,\omega_2',\omega_3)
\end{align*}
where 
$$\omega_2'=\frac{at+b}{ct+d}\quad\textrm{ and }\quad\omega_2=\frac{a+b}{c+d}.$$
If $c=0$, then $b=\omega_1d$, $a=(\omega_2-\omega_1)d$ and $\omega_3=\infty$. Thus, we have
$$\omega_2'=(\omega_2-\omega_1)t+\omega_1.$$
If $c\ne 0$, then $a=\omega_3c$ and $b=\omega_1d$. In this case, we have 
$$\frac{d}{c}=\frac{\omega_3-\omega_2}{\omega_2-\omega_1}$$ which yields
$$\omega_2'=\frac{(\omega_2-\omega_1)\omega_3t+(\omega_3-\omega_2)\omega_1}{(\omega_2-\omega_1)t+\omega_3-\omega_2}.$$ This completes the proof of the proposition.
\end{proof}

According to the Theorem~\ref{thm:fundom}, there is a unique point in $$S_0\cap S_1\cap(S_2\cup S_3)\cap\Gamma\cdot(\omega_2,\omega_2,\omega_3)$$ which we will denote by $[(\omega_1,\omega_2,\omega_3)]_\Gamma$.

\begin{cor}Let $S=S_0\cap S_1\cap(S_2\cup S_3)$ and $\psi_h\colon S\to S$ be the map given by
$$\psi_h(\omega_1,\omega_2,\omega_3)=\left[\left(\omega_1,\frac{(\omega_2-\omega_1)\omega_3t+(\omega_3-\omega_2)\omega_1}{(\omega_2-\omega_1)t+\omega_3-\omega_2},\omega_3\right)\right]_\Gamma.$$
Then, the diagram \begin{equation*}
\begin{aligned}
\xymatrix{ \Gamma\backslash G \ar[r]^{\,\, \phi_h\quad} \ar[d]_{\Phi} & \Gamma\backslash G \ar[d]_{\Phi}   \\ 
S \ar[r]^{\,\, \psi_h \quad} & S }
\end{aligned}
\end{equation*}
commutes.
\end{cor}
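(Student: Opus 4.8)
The plan is to assemble the corollary from the three ingredients already in hand: Proposition~\ref{prop:commuting}, which identifies the top square (relating $\tau_h$ on $G$ with $\varphi_h$ on $\partial_\infty\!\mathcal{T}^3_{\textrm{dist}}$ through $\Phi$); Theorem~\ref{thm:fundom}, which guarantees that $S=S_0\cap S_1\cap(S_2\cup S_3)$ is a strong fundamental domain, so that the projection $\partial_\infty\!\mathcal{T}^3_{\textrm{dist}}\to\Gamma\backslash\partial_\infty\!\mathcal{T}^3_{\textrm{dist}}$ restricts to a bijection $S\xrightarrow{\sim}\Gamma\backslash\partial_\infty\!\mathcal{T}^3_{\textrm{dist}}$; and the bijection $\Phi\colon G\to\partial_\infty\!\mathcal{T}^3_{\textrm{dist}}$ descending to a bijection $\bar\Phi\colon\Gamma\backslash G\to\Gamma\backslash\partial_\infty\!\mathcal{T}^3_{\textrm{dist}}$, which is $\Gamma$-equivariance of $\Phi$ combined with the fact that $\tau_h$ commutes with left multiplication by $\Gamma$ (so $\phi_h$ is well defined on $\Gamma\backslash G$ in the first place).

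First I would record that $[\,\cdot\,]_\Gamma\colon\partial_\infty\!\mathcal{T}^3_{\textrm{dist}}\to S$ is precisely the inverse, under the identification $S\simeq\Gamma\backslash\partial_\infty\!\mathcal{T}^3_{\textrm{dist}}$, of the quotient map: by definition $[(\omega_1,\omega_2,\omega_3)]_\Gamma$ is the unique element of $S$ in the same $\Gamma$-orbit, so $[\,\cdot\,]_\Gamma$ is $\Gamma$-invariant and its restriction to $S$ is the identity. Hence $\psi_h$, as defined, is literally the composite
\[
S\hookrightarrow\partial_\infty\!\mathcal{T}^3_{\textrm{dist}}\xrightarrow{\ \varphi_h\ }\partial_\infty\!\mathcal{T}^3_{\textrm{dist}}\xrightarrow{\ [\,\cdot\,]_\Gamma\ }S,
\]
i.e. $\psi_h$ is the map on $S$ induced by $\varphi_h$ through the bijection $S\simeq\Gamma\backslash\partial_\infty\!\mathcal{T}^3_{\textrm{dist}}$. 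So the real content is that $\varphi_h$ descends to $\Gamma\backslash\partial_\infty\!\mathcal{T}^3_{\textrm{dist}}$ and that the resulting map agrees with $\phi_h$ under $\bar\Phi$. The descent of $\varphi_h$ follows from Proposition~\ref{prop:commuting}: since $\varphi_h=\Phi\circ\tau_h\circ\Phi^{-1}$ and $\tau_h$ (right translation) commutes with left multiplication by any $\gamma\in\Gamma$, while $\Phi$ is $\Gamma$-equivariant, $\varphi_h$ commutes with the $\Gamma$-action, hence induces $\bar\varphi_h$ on the quotient. The square in the corollary is then obtained by stacking the top square of Proposition~\ref{prop:commuting} (pushed down to quotients) with the identification $S\simeq\Gamma\backslash\partial_\infty\!\mathcal{T}^3_{\textrm{dist}}$ on each side.

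Concretely, I would chase an element $x=\Gamma g\in\Gamma\backslash G$. Its image in $S$ under $\bar\Phi$ followed by $S\simeq\Gamma\backslash\partial_\infty\!\mathcal{T}^3_{\textrm{dist}}$ is $[\Phi(g)]_\Gamma$; applying $\psi_h$ gives $[\varphi_h(\Phi(g))]_\Gamma=[\Phi(gh)]_\Gamma$ by Proposition~\ref{prop:commuting}. Going the other way, $\phi_h(x)=\Gamma gh$, whose image in $S$ is $[\Phi(gh)]_\Gamma$. These agree, so the square commutes. I expect the only subtlety — and the one point worth stating carefully — to be the bookkeeping that the vertical maps labelled $\Phi$ in the corollary's diagram should really be read as $\bar\Phi$ followed by the fundamental-domain identification $\Gamma\backslash\partial_\infty\!\mathcal{T}^3_{\textrm{dist}}\simeq S$ of Theorem~\ref{thm:fundom}; once that reading is fixed there is no genuine obstacle, only the verification that $[\,\cdot\,]_\Gamma$ is well defined (which is exactly the uniqueness clause of Theorem~\ref{thm:fundom}) and that $\phi_h$ is well defined on $\Gamma\backslash G$ (immediate from $\Gamma gh\mapsto\Gamma g'h$ whenever $\Gamma g=\Gamma g'$). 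I would write the proof in three short steps: (1) $[\,\cdot\,]_\Gamma$ realizes the inverse of $S\xhookrightarrow{} \partial_\infty\!\mathcal{T}^3_{\textrm{dist}}\twoheadrightarrow\Gamma\backslash\partial_\infty\!\mathcal{T}^3_{\textrm{dist}}$; (2) $\varphi_h$ is $\Gamma$-equivariant, hence descends, and $\psi_h$ is its conjugate by that identification; (3) the element chase above closes the diagram.
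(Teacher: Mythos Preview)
Your proposal is correct and matches the paper's intent: the paper gives no separate proof of this corollary, treating it as an immediate consequence of Proposition~\ref{prop:commuting} and Theorem~\ref{thm:fundom}, which is exactly what you unpack. Your three-step element chase is precisely the verification the paper leaves implicit.
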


\end{document}